\newtheorem{theorem}{Theorem}[section]
\newtheorem{lemma}[theorem]{Lemma}
\newtheorem{proposition}[theorem]{Proposition}
\newtheorem{corollary}[theorem]{Corollary}
\theoremstyle{definition}
\newtheorem{definition}[theorem]{Definition}
\newtheorem{remark}[theorem]{Remark}
\newtheorem{ipotesi}[theorem]{Assumption}
\newtheorem{example}[theorem]{Example}
\numberwithin{equation}{section}
\numberwithin{subsection}{section}
\newcommand{\ph}{\varphi}
\newcommand{\sing}{{\rm Sing}}
\newcommand{\de}{\partial}
\def\a#1{\left\llbracket{#1}\right\rrbracket}
\newcommand{\cH}{{\mathcal{H}}}
\def\I#1{{\mathcal{A}}_{#1}}
\newcommand{\cG}{{\mathcal{G}}}
\newcommand{\etaa}{{\bm{\eta}}}
\newcommand{\D}{\textup{Dir}}
\def\Xint#1{\mathchoice
{\XXint\displaystyle\textstyle{#1}}%
{\XXint\textstyle\scriptstyle{#1}}%
{\XXint\scriptstyle\scriptscriptstyle{#1}}%
{\XXint\scriptscriptstyle\scriptscriptstyle{#1}}%
\!\int}
\def\XXint#1#2#3{{\setbox0=\hbox{$#1{#2#3}{\int}$ }
\vcenter{\hbox{$#2#3$ }}\kern-.6\wd0}}
\def\mint{\Xint-}
\newcommand\N{{\mathbb N}}
\newcommand\C{{\mathbb C}}
\newcommand\R{{\mathbb R}}
\newcommand{\eps}{{\varepsilon}}
\newcommand{\dist}{{\rm {dist}}}
\DeclareMathAlphabet{\mathpzc}{OT1}{pzc}{m}{it}
\title[Number of singular points for planar multivalued harmonic functions]{On the number of singular points for planar multivalued harmonic functions}
\begin{document}

\author[F.~Ghiraldin]{Francesco Ghiraldin}
\address{Max Planck Institut f\"ur Mathematik in den Naturwissenschaften \\
Inselstrasse 22, 04103 Leipzig, Germany}
\email{Francesco.Ghiraldin@mis.mpg.de}
\author[L.~Spolaor]{Luca Spolaor}
\address{Max Planck Institut f\"ur Mathematik in den Naturwissenschaften \\
Inselstrasse 22, 04103 Leipzig, Germany}
\email{Luca.Spolaor@mis.mpg.de}

\keywords{geometric measure theory, regularity for area minimizing currents, multivalued functions}
\subjclass[2010]{35LXX}

\date{07 January 2016}

\begin{abstract}In this note we give a quantitative estimate on the number of singular points of multiplicity $Q$ of a $2$-dimensional $Q$-valued energy minimizing map, in terms of the value of its frequency function. 
\end{abstract}

\maketitle

Multivalued functions  have been used by Almgren in \cite{Alm} to study the regularity of Area Minimizing currents of codimension higher than one. 
In particular, the size of the singular set of $\D$-minimizing multivalued functions yields informations on the size of the singular set of Area Minimizing currents (see also \cite{DS2,DS3,DS4,DS5}), and for this reason it might be interesting to have a bound on its measure. 
In \cite{Alm}, Almgren proved that for a $Q$-valued $\D$-minimizing function $f\colon \Omega\subset  \R^m \to \I{Q}(\R^n)$, the dimension of the singular set must be bounded by $(m-2)$, and that it must be locally finite when $m=2$ (see also \cite{DS1}). In \cite{FMS} the authors improved this estimate showing that the Minkowsky dimension is also bounded 
by $(m-2)$.

In this paper, in the case $m=2$, we prove an interior bound on the number of singular points of multiplicity $Q$ of a $\D$-minimizing function in terms of the so-called frequency function. 
This result is optimal, in the sense that  this quantity  cannot bound the number of singular points 
of multiplicity lower than $Q$. The main result is the following (see Section~\ref{s:1} for the precise definitions):
\begin{theorem}\label{t:main}
Suppose that $f\in W^{1,2}(B_2, \I{Q}(\R^n))$ is a  Dir-minimizing function in the ball $B_2\subset \R^2$. 
 Then the following bound holds: 
\begin{equation}\label{e:bound}
\cH^0(\{x\in B_{\sfrac12}\cap \sing(f)\,:\, f(x)=Q\a{p}\})\leq C(n)^{I_{g}(0,2)}\,,
\end{equation}
where $ g= \sum_{i=1}^Q \a{ f_i - \frac 1Q \sum_{i=1}^Q f_i} $ and the frequency function $I_g(x,r)$ is defined by 
\[
I_g(x,r):=\frac{r\,\int_{B_r(x)} |Dg|^2}{\int_{\de B_r(x)}|g |^2}\ \, .
\]
\end{theorem}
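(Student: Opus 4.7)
The plan is to reduce the bound \eqref{e:bound} to an estimate on the zero set of $g$, and then exploit the $2$-dimensional structure of $\D$-minimizers to count these zeros by a multiscale frequency argument.

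\emph{Reduction to zeros of $g$.} A point $x$ satisfies $f(x)=Q\a{p}$ for some $p$ if and only if $f_i(x)=\eta\circ f(x)$ for every $i$, i.e.\ $g(x)=Q\a{0}$; excluding the trivial case $g\equiv 0$ (in which $f$ is smooth and $\sing(f)=\emptyset$), the set appearing in \eqref{e:bound} therefore coincides with $\{x\in B_{\sfrac12}:g(x)=Q\a{0}\}$. Since the average $\eta\circ f$ is single-valued harmonic and decouples the Dirichlet energy from the fluctuation, $g$ is itself $\D$-minimizing with $\eta\circ g\equiv 0$, and one may work exclusively with $g$ and its frequency $I_g$.

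\emph{Local structure at a zero.} At each zero $x_0$, Almgren's monotonicity implies that $r\mapsto I_g(x_0,r)$ is nondecreasing and that $\alpha(x_0):=I_g(x_0,0^+)$ equals the homogeneity degree of a tangent map of $g$ at $x_0$. In dimension two, the classification of homogeneous tangent $\D$-minimizers (Almgren, refined by De Lellis--Spadaro) forces $\alpha(x_0)=k/K$ with $k\geq 1$ and $K\mid Q$, so $\alpha(x_0)\geq 1/Q$. A quantitative center-shift comparison between $I_g(x_0,r)$ and $I_g(0,2)$ for $x_0\in B_{\sfrac12}$ and $r\leq 1$ then provides both a uniform upper bound on local frequencies and the pointwise lower bound $I_g(x_0,r)\geq 1/Q$ at every zero.

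\emph{Multiscale iteration.} The counting would be carried out by a dyadic multiscale iteration. At each scale one covers $B_{\sfrac12}$ by a uniformly bounded number $C(n)$ of sub-balls and argues that those sub-balls still containing a zero correspond to a definite frequency drop with respect to their parent, the surplus being absorbed by higher-degree homogeneous modes. Since the minimal frequency at any zero is bounded below by $1/Q$, the iteration can be sustained at most $O(I_g(0,2))$ times before the zero set is forced to be empty; compounding the factor $C(n)$ over these scales yields the claimed bound $C(n)^{I_g(0,2)}$.

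\emph{Main obstacle.} The heart of the argument is the multiscale bookkeeping: at every step one must guarantee a definite drop of the frequency in each sub-ball containing a zero, which requires a sharp quantitative version of the center-shift estimate together with the strong unique continuation properties of planar $\D$-minimizers. A cleaner alternative, perhaps not pursued here, would be to exploit the complex-analytic representation of planar $\D$-minimizers as holomorphic multi-sections of a branched cover and invoke Jensen's formula, which would give a sharper linear-in-frequency count; the exponential constant $C(n)^{I_g(0,2)}$ in the statement reflects the softer, purely real-variable nature of the iterative approach.
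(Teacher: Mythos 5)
The reduction to $g$ and the two frequency facts you state (the interior bound $I_g(x,1)\leq C\,I_g(0,2)$ for zeros $x\in B_{1/2}$, and the lower bound $I_g(x,0^+)\geq 1/Q$ in the planar case) match the paper's Assumption~(ZM) and Proposition~\ref{p:freq}. Your proposed counting scheme, however, contains a genuine gap, and the error is precisely the one that Example~2 in the introduction is designed to rule out.

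You assert that ``those sub-balls still containing a zero correspond to a definite frequency drop with respect to their parent,'' so that ``the iteration can be sustained at most $O(I_g(0,2))$ times before the zero set is forced to be empty.'' Neither claim is true. A sub-ball centered at a zero can contain further zeros without any frequency drop between the two scales: Example~2 ($g_\eps=\pm(z(z-\eps))^{1/2}$) has two $Q$-points at distance $\eps$, uniformly bounded frequency, and $\eps$ arbitrarily small, so the number of refinement steps needed to separate the zeros into distinct balls cannot be controlled by $I_g(0,2)$. What one can control is only the number of scales at which the \emph{maximal count} of zeros per ball strictly drops. This is the whole point of the Naber--Valtorta bookkeeping the paper uses: it tracks $N_k=\max_j\cH^0(D_Q\cap B_{\lambda^{k+1}}(x^{k+1}_j))$, lets the iteration run for an a priori unbounded number of scales, and observes that whenever $N_{k}>N_{k+1}$ there is a zero $y^k$ in a definite annulus around the eventual accumulation point $x^{\bar k}$, forcing a frequency drop of at least $\delta$ at $x^{\bar k}$ across comparable scales. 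Since these annuli have bounded overlap, the number of such count-drops is at most $C(\lambda)\,I_g(x^{\bar k},1)/\delta\lesssim I_g(0,2)$, and only then is the Vitali factor compounded. Your version would instead need to compound the factor over all scales, which gives no bound.

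You also skip the lemma that makes the drop quantitative: Proposition~\ref{p:freq_drop} and Corollary~\ref{c:annulus-regularity}, which show by a compactness/blow-up argument (using the fact that nontrivial $\alpha$-homogeneous planar $\D$-minimizers with $\etaa\circ w=0$ are uniformly bounded away from $Q\a{0}$ on $B_1\setminus B_{1/2}$, a consequence of $\dim_\cH\sing(w)=0$) that a small frequency drop between scales $r$ and $\lambda r$ at a $Q$-point forces the annulus $B_r\setminus B_{\lambda r}$ to be free of $Q$-points. Without this lemma there is nothing that converts ``no frequency drop'' into ``no other zeros in the annulus,'' and your phrase about ``the surplus being absorbed by higher-degree homogeneous modes'' has no counterpart in the argument. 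Your remark that one might get a sharper, linear-in-frequency bound via the complex-analytic representation and Jensen's formula is an interesting and plausible alternative route, but it is not what the paper does and you do not carry it out.
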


The discreteness of the set of singular points of multiplicity $Q$, and indeed of the whole singular set $\sing (f)$, without a quantitative bound on their number, was already proved in \cite{DS1}, and in fact it holds even in the more general context of Area Minimizing $2$-dimensional currents (cf. \cite{Chang, DSS1, DSS2,DSS3,DSS4}). The proof of these results rely on a hard analytical estimate, originally due to Reifenberg (cf. \cite{Reif, Wh, DS1}), called epiperimetric inequality, which implies the uniqueness of the tangent map. We will not need this result in this note and indeed we will only use very soft covering and compactness arguments. However, as a drawback, we will not bound the measure of the whole singular set, but only that of the multiplicity $Q$ points. This is the best one can do, since a bound on the frequency function is in general not enough to control the number of points of multiplicity less than $Q$, as the following example shows. 
\begin{example}
Let $Q=4$ and fix $N$ points $z_i \in B_\frac 12(0)\setminus B_\frac 14(0)$. For every $\eps>0$ let us consider the 
$4$-valued function
 \[
  f_\eps(z) = \pm \left( z (1\pm \eps\prod_{i=1}^{N}(z-z_i)^{\frac 12})\right)^{\frac 12} \in W_{{\rm loc}}^{1,2}(\C,\mathcal{A}_4(\C))\,, 
 \]
 which corresponds to the irreducible algebraic curve 
 \[
 \left\{(z,w)\in \C^2\,:\, (w^2-z)^2=\eps^2\, z^2\, \prod_{i=1}^N(z-z_i) \right\} . 
 \]
 The function $f_\eps$ is $\D$-minimizing, because it is a multivalued holomorphic function, see \cite[Theorem 2.20]{Alm} or \cite{Spada}, and $\sing(f_\eps)=\{0\}\cup\{z_i, i=1,\dots,N\}$. 
For $\eps$ sufficiently small  
 the only $4$-point is the origin, the $\{z_i\}$ are all points of multiplicity $Q'=2$ and $\etaa\circ f_\eps =0$. 
 Letting $\eps\rightarrow 0$ the frequency $I_{f_\eps}(0,2)$ converges to $\sfrac12$, the frequency of the homogeneous 
 limit map $f_0(z)=2\a{\pm z^{\sfrac12}}$. Therefore $N$ can be taken arbitrarily large and $\eps=\eps(N)$ sufficiently small, 
 ruling out the validity of any estimate on the number of $2$-points in terms of $I_{f_\eps}(0,2)$. 
\end{example}

Analogously the number $I(0,2)$ cannot yield any lower bound on the distance between $Q$-points, that is the regularity scale, as the following example shows:
\begin{example}
Consider the $2$-valued function
 \[
  g_\eps(z) = \pm( z (z-\eps))^{\frac 12} \in W_{{\rm loc}}^{1,2}(\C,\mathcal{A}_2(\C))
 \]
 corresponding to the irreducible algebraic  curve
 \[
  \left\{(z,w)\in \C^2\,:   w^2 =  z (z-\eps)      \right\}:
 \]
the points $z=0$ and $z=\eps$ are the only singular points, lying at distance $\eps$, where  $g_\eps(0) = g_\eps(\eps) = 2\a{0}$. Moreover  as before $I_{g_\eps}(0,2)\rightarrow 1$ as $\eps\rightarrow 0$. This example  also indicates that  the proof of Theorem \ref{t:main} is not trivial, as there is no hope to give a lower bound on the scale at which every singular point is isolated.
\end{example}

The optimality of the exponential bound in terms of the frequency is still an open problem. For the singular set of solutions to elliptic PDEs, Lin conjectured the optimal bound to be quadratic in the frequency (see \cite{Lin}). However, Lin's conjecture has been proved only for planar harmonic functions, while in the general case the best possible estimate is exponential (cf. \cite{NV} or \cite{HLBook} and the reference therein for a complete overview). 

This short note is inspired by the papers \cite{Lin,NV}, where the authors study the size of the nodal and critical sets of the solutions of elliptic PDEs, and it can be thought of as a continuation of \cite{FMS}, where the quantitative stratification introduced in \cite{CN1,CN2} is extended to the setting of multivalued maps. Precise references will be given in the rest of the paper.

The note is divided in four short sections: after setting the notations and recalling some preliminary results, we prove bounds on the frequency function in interior balls. 
Then we prove that small frequency drop between two scales implies rules out the existence of any $Q$-point in the corresponding annulus. We finally combine these two ingredients with a covering argument to conclude Theorem \ref{t:main}. 

\noindent
{\bf Acknowledgements:} we wish to thank Emanuele Spadaro for many useful discussions and comments.

\section{Preliminary results and notations}\label{s:1}

Let us start by recalling some known results for $\D$-minimizing $Q$-valued maps. The main references are \cite{DS1, FMS}.

\begin{definition}
We denote the space of $Q$-points by $\I{Q}(\R^n):=\left\{\sum_{i=1}^Q\a{p_i}\}\,:\, p_i\in \R^n\right\}$, 
where $\a{p}$ denotes the Dirac delta at $p$. We can equip it with a complete metric
\[
\cG(T,S):=\min_{\sigma \in \mathcal{P}_Q}\left(  \sum_{i=1}^Q|p_i-p'_{\sigma(i)}|^2  \right)^{\sfrac12}\,,
\]
where $T=\sum \a{p_i},\,S=\sum\a{p'_i}$ and $\mathcal{P}_Q$ is the symmetric group of $Q$ elements. \\
A $Q$-valued map is a measurable function $f\colon \Omega\subset \R^m \to \I{Q}(\R^n)$.
\end{definition}

It is always possible to write almost everywhere $f(x)=\sum_{i=1}^Q\a{f_i(x)}$, where $f_i$ are measurable functions, not necessarily unique.

\begin{definition}[Sobolev spaces] A measurable function $f\colon \Omega \to \I{Q}(\R^n)$ is in the Sobolev class $W^{1,p}$ if there exists functions $\ph_j\in L^p(\Omega, \R^+)$ such that 
\begin{itemize}
\item[(i)] $x\mapsto \cG(f(x),T)\in W^{1,p}(\Omega)$ for all $T\in \I{Q}$;
\item[(ii)] $|\de_j \cG(f,T)|\leq \ph_j$ almost everywhere in $\Omega$ for all $T \in \I{Q}$ and for all $j\in \{1,\dots,m\}$
\end{itemize} 
In particular we denote $|\de_j f|$ the minimal functions satisfying (ii) and we set 
\[
|Df|^2:=\sum_{j=1}^m|\de_jf |^2\,.
\]
Let $\Omega \subset \R^m$ be a Lipschitz bounded open set and $f \in W^{1,p}(\Omega, \I{Q})$. A function $g$ belonging to $L^p(\de\Omega, \I{Q})$ is said to be the trace of $f$ at $\de \Omega$ (and we denote it by $f|_{\de \Omega}$) if, for every $T \in \I{Q}$, the trace of the real-valued Sobolev function $\cG(f,T)$ coincides with $\cG(g,T)$.
\end{definition}

In light of this definitions we can consider the minimization problem  
\begin{equation}\label{e:Dir_min}
\int_{B_2}|Df|^2=\inf\left\{\int_{B_2}|Dh|^2\,:\,h\in W^{1,2}(B_2, \I{Q}(\R^n))\,\text{ and } \,h|_{\de B_2}=f|_{\de B_2}\right\}\,
\end{equation}
and call its solutions Dir-minimizing functions (see \cite[Theorem 0.8]{DS1}). 
Moreover we define $\sing(f)$ as the complement of the set of points 
$x\in B_2$ such that there exists a neighborhood $U$ of $x$ and $Q$ analytic 
functions $f_i\colon U\to \R^n$ satisfying
\[
f(y)=\sum_{i=1}^Q\a{f_i(y)} \,\quad \text{for almost every }y\in B\,,
\]
and either $f_i(x)\neq f_j(x)$ for every $x\in U$ or $f_i\equiv f_j$.

\begin{theorem}[Regularity of $\D$-minimizer {\cite[Theorems 0.9 \& 0.12]{DS1}}]\label{t:regularity}
There exists $\alpha=\alpha(m,Q)\in ]0,1[$ (with $\alpha=\sfrac1Q$ when $m=2$) and $C=C(m,Q,\delta,n)$ such that if $f \in W^{1,2}(B_2,\I{Q})$ is Dir-minimizing, then $f \in C^{0,\alpha}_{{\rm loc}}(B_2, \mathcal A_Q)$ 
and 
\begin{equation}\label{e:C0bound}
[f]_{C^{0,\alpha}(\bar B_\delta)}\leq C \left( \int_{B_2}|Df|^2 \right)^{\sfrac12}\quad\text{for every }0<\delta<2\,.
\end{equation}
Furthermore we have the dimensional bound $\dim_{\cH}(\sing(f)\cap B_2)\leq m-2$. 
\end{theorem}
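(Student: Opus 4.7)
The plan is to prove the three assertions via energy-comparison arguments, a classification of planar tangent maps, and a Federer-style dimension reduction. The central tool throughout is Almgren's frequency function
\[
I_f(x,r) = \frac{r\int_{B_r(x)}|Df|^2}{\int_{\de B_r(x)}|f|^2},
\]
whose monotonicity and rigidity (constant iff $f$ is homogeneous about $x$) underpin both the regularity and the stratification arguments.

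For the H\"older continuity I would first establish a Morrey-type decay $\int_{B_r(x)}|Df|^2 \leq C\,r^{m-2+2\alpha}\int_{B_2}|Df|^2$ for every $x$ in an interior ball. The heart of the argument is a comparison with an $\I{Q}$-valued harmonic replacement: on a small ball $B_r(x)$ one extends $f|_{\de B_r(x)}$ by a Lipschitz $\I{Q}$-valued map whose Dirichlet energy is controlled via the classical estimate for the averaged map $\etaa\circ f = \frac{1}{Q}\sum_i f_i$, which is genuinely $\R^n$-valued harmonic. Minimality of $f$ then forces its own energy below that of the competitor, yielding the decay. Campanato's integral characterization of H\"older continuity gives $f\in C^{0,\alpha}_{\rm loc}$, and rescaling to a fixed interior ball produces the seminorm bound \eqref{e:C0bound}.

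For the sharp planar exponent I would classify $\alpha$-homogeneous $\D$-minimizers on $\R^2$: writing such a map in polar coordinates $f(\rho e^{i\theta}) = \rho^\alpha g(\theta)$, the Euler--Lagrange system combined with the $\mathcal{P}_Q$-orbit structure of the fibres forces $g$ to lift through the branched cover $z\mapsto z^{1/Q}$. The smallest positive admissible $\alpha$ is therefore $1/Q$, realized by the model map $\sum_{j=1}^Q \a{\zeta^j z^{1/Q}}$ with $\zeta=e^{2\pi i/Q}$. Together with frequency monotonicity and compactness of $\D$-minimizers under blow-ups, this identifies $1/Q$ as the worst-case exponent when $m=2$.

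For the dimensional bound I would use stratification by the translational symmetry of tangent maps. Let $\mathcal{S}^k$ be the set of $x\in B_2$ at which no tangent map of $f$ is invariant under translations along a $(k+1)$-dimensional subspace. A fully $m$-symmetric tangent at $x$ is constant, so $x$ is a regular point; an $(m-1)$-symmetric tangent descends to a one-dimensional $\D$-minimizer, which is a classical harmonic map and hence smooth. Therefore $\sing(f)\subset \mathcal{S}^{m-2}$. Federer's dimension reduction, implemented through the frequency function as a quasi-monotone quantity together with compactness of blow-ups, then yields $\dim_{\cH}(\mathcal{S}^k)\leq k$, and in particular $\dim_{\cH}(\sing(f))\leq m-2$.

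The main obstacle is constructing competitors with the right energy decay in the first step: $\I{Q}$-valued maps cannot be naively averaged or spliced, so one needs a Lipschitz extension theorem and a nearest-point retraction onto $\I{Q}(\R^n)$ that behave quantitatively with respect to Dirichlet energy. This technical machinery is the real crux; once available, the Morrey decay, the classification of 2D tangents, and the dimension reduction follow standard patterns from the scalar harmonic theory.
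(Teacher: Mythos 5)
This theorem is not proved in the present paper: it is quoted from De Lellis--Spadaro \cite[Theorems 0.9 and 0.12]{DS1}, so the relevant comparison is with the proof given there. Your outline reproduces the architecture of that proof: Morrey--Campanato decay of the Dirichlet energy obtained by comparison with competitors built from Lipschitz extension and retraction machinery, classification of planar homogeneous minimizers to identify $\alpha=\sfrac1Q$ when $m=2$, and Almgren's stratification by symmetry of tangent maps plus Federer dimension reduction for $\dim_{\cH}(\sing(f))\le m-2$. That is the right skeleton.

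There is, however, one concretely wrong step. The energy of the competitor cannot be ``controlled via the classical estimate for the averaged map $\etaa\circ f$'': the Dirichlet energy of a $Q$-valued map is in no way comparable to that of its barycenter (the map $\a{g}+\a{-g}$ has identically vanishing average and arbitrarily large energy), and indeed Assumption (ZM) of this very paper normalizes $\etaa\circ f\equiv 0$ precisely because the average carries no information about the branching; under that normalization your proposed control degenerates completely. What is actually needed is an estimate of the form $\D(f,B_r)\le C(m)\,r\bigl(\int_{\de B_r}|\de_\tau f|^2+\text{lower order terms}\bigr)$ with $C(m)<(m-2)^{-1}$ when $m\ge 3$; in \cite{DS1} this is obtained by a genuinely multivalued induction on $Q$, splitting into cases according to whether the values of $f$ on a good sphere separate into clusters, together with an interpolation lemma. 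In the planar case the competitor is built by unrolling the boundary datum through the covering $\theta\mapsto Q'\theta$ of the circle and harmonically extending the single-valued lift, which gives $D_f(x,r)\le Q\,r\int_{\de B_r(x)}|\de_\tau f|^2$ --- exactly the inequality invoked in the proof of Proposition~\ref{p:freq} --- and it is this covering construction, rather than the Euler--Lagrange analysis of $g(\theta)$ alone, that produces the sharp exponent $\sfrac1Q$. The remaining two steps of your sketch (classification of planar tangents after decomposition into irreducible pieces, and the stratification argument) are sound modulo the standard compactness of blow-ups.
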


We remark that in the rest of the work we will need the dimensional bound $\dim_{\cH}(\sing(f)\cap B_2)\leq m-2$ only for homogeneous $\D$-minimizing functions, see the proof of Proposition~\ref{p:freq_drop}.

Finally we introduce the key tool of this work, that is the frequency function.

\begin{definition}[Frequency function] For any $f\in W^{1,2}(\Omega, \I{Q})$, 
$x\in \Omega$ and $r\in (0, \dist(x, \de \Omega))$, such that 
$H_f(x,r)>0$, we define the frequency function 
\[
I_f(x,r):=r D_f(x,r)/H_f(x,r),
\]
where
\[
D_f(x,r):=\int_{B_r(x)} |Df|^2\quad \text{and}\quad H_f(x,r):=\int_{\de B_r(x)}|f|^2\,.
\]
\end{definition}

The following is the main estimate on the frequency function discovered by Almgren \cite{Alm}.

\begin{theorem}[Monotonicity estimate {\cite[Theorem 3.15]{DS1}}]\label{t:mon}Let $f\in W^{1,2}(\Omega, \I{Q})$ be $\D$-minimizing in $\Omega$ and assume that $H_f(x,r)>0$ for every $r\in (s,t)$, with $0\leq s< t< \dist(x, \de \Omega)$. Then $I(x,r)$ is monotone nondecreasing for $r\in (s,t)$ and moreover
\begin{equation}\label{e:monotonicity_identity}
I_f(x,t)-I_f(x,s)=\int_s^t\frac{r}{H_f(x,r)^2}\left(\int_{\de B_r}|\de_r f|^2\,\cdot\int_{\de B_r} |f|^2-\left( \int_{\de B_r}\langle \de_r f, f  \rangle\right)^2\right)\,dr\,.
\end{equation}
\end{theorem}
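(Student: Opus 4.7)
The plan is to follow Almgren's classical argument adapted to the $Q$-valued setting, via outer and inner variation identities for Dir-minimizers. The monotonicity of $I_f$ ultimately reduces to Cauchy-Schwarz on the sphere $\de B_r(x)$. Without loss of generality I take $x=0$ and drop $x$ from the notation; the computation is dimension-agnostic and proceeds for arbitrary $m$.

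\emph{Step 1 (variation identities).} Minimality of $f$ in \eqref{e:Dir_min}, tested against the $Q$-valued perturbations developed in \cite[Section~3]{DS1}, yields two identities valid for a.e.\ $r \in (s,t)$. The \emph{outer variation}, obtained by perturbing $f \mapsto f + \eps \psi$ with $\psi(x) = \varphi(|x|) f(x)$ and $\varphi$ approximating $\mathbf{1}_{B_r}$, gives after integration by parts
\[
D_f(r) = \int_{\de B_r}\langle \de_r f, f\rangle \, dS,
\]
where $\langle \de_r f, f\rangle := \sum_{i=1}^Q \langle \de_r f_i, f_i\rangle$ in any local measurable selection. The \emph{inner variation}, obtained by reparametrizing the domain via $y \mapsto y + \eps X(y)$ for a radial vector field $X$ supported in $B_r$, gives the Pohozaev-type identity
\[
(m-2)\, D_f(r) + 2r \int_{\de B_r}|\de_r f|^2 \, dS = r\, D'_f(r), \qquad D'_f(r) = \int_{\de B_r}|Df|^2 dS.
\]

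\emph{Step 2 (derivative of $H_f$).} Rescaling to the unit sphere, $H_f(r) = r^{m-1}\int_{\s^{m-1}}|f(r\omega)|^2 dS(\omega)$. Differentiating in $r$ and applying the outer variation identity to the cross term,
\[
H'_f(r) = \frac{m-1}{r} H_f(r) + 2 D_f(r).
\]

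\emph{Step 3 (derivative of $I_f$ and Cauchy-Schwarz).} Applying the quotient rule to $I_f(r) = r D_f(r)/H_f(r)$ and substituting the expressions from Steps 1 and 2, the $(m-1)/r$ and $(m-2)/r$ contributions cancel algebraically. One obtains, up to a universal positive constant fixed by the outer-variation normalization,
\[
I'_f(r) = \frac{c\, r}{H_f(r)^2}\left(\int_{\de B_r}|\de_r f|^2 dS \cdot \int_{\de B_r}|f|^2 dS - \Big(\int_{\de B_r}\langle \de_r f, f\rangle dS\Big)^2\right),
\]
matching the integrand of \eqref{e:monotonicity_identity}. The parenthesis is non-negative by Cauchy-Schwarz: pointwise $|\langle \de_r f, f\rangle| \le |\de_r f|\,|f|$, combined with the integral form $\big(\int gh\big)^2 \le \int g^2 \int h^2$ applied to $g = |\de_r f|$ and $h = |f|$. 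Hence $I'_f \ge 0$ on $(s,t)$, and integrating from $s$ to $t$ yields both the identity \eqref{e:monotonicity_identity} and the monotonicity statement.

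\emph{Main technical difficulty.} The heart of the proof lies in Step 1: deriving the outer and inner variation identities for $Q$-valued Sobolev maps, where perturbations must be defined consistently on $\I{Q}(\R^n)$. This is addressed in \cite{DS1} via the bi-Lipschitz embedding of $\I{Q}$ into Euclidean space and the $Q$-valued chain rule. A secondary subtlety is that $D_f$ and $H_f$ are only absolutely continuous in $r$, so the derivatives above are valid for a.e.\ $r$ via Sobolev slicing; the standing hypothesis $H_f(r) > 0$ on $(s,t)$ ensures $I_f$ is absolutely continuous there, justifying the fundamental theorem of calculus needed to recover \eqref{e:monotonicity_identity}.
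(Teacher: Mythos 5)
This theorem is imported from \cite[Theorem 3.15]{DS1} and the paper gives no proof of it, so there is nothing to compare against except the reference itself; your argument is precisely the standard Almgren outer/inner-variation proof used there, and it is correct: the two variation identities, the formula for $H_f'$, the logarithmic differentiation of $I_f$, and the Cauchy--Schwarz step all check out. The only point worth noting is that your computation (correctly) produces the constant $c=2$ in front of the integrand, whereas the identity \eqref{e:monotonicity_identity} as displayed omits this factor; the discrepancy is immaterial for monotonicity and for every use of the identity in the paper.
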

The frequency function is strongly linked to the growth of the energy and of the $L^2$ norm of a 
$\D$-minimizing map, as recalled in the next proposition, which can be deduced from the first variation formulae and 
Theorem \ref{t:mon}. 
\begin{proposition}[Bounds on Height and Energy {\cite[Theorem 3.15 \& Corollary 3.18]{DS1}}]
Let $f:B_2\rightarrow \mathcal A_Q$ be $\D$-minimizing, 
 $x\in B_1$ and suppose that $H_f(x,r)>0$ for $0<r<\dist(x,\de B_2)$. 
 Then for almost every $r\leq t< \dist(x, \de B_2)$ the following estimates hold 
\begin{gather}
\frac{d}{d\tau}\Big|_{\tau=t} \left[ \ln \left( \frac{H_f(x,\tau)}{\tau^{m-1}}  \right)  \right]=\frac{2\,I_f(x,r)}{r}  \label{e:banana1} \\
\left(\frac{r}{t}\right)^{2 I_f(x,t)} \frac{H_f(x,t)}{t^{m-1}}\leq \frac{H_f(x,r)}{r^{m-1}}\leq \left(\frac{r}{t}\right)^{2 I_f(x,r)} \frac{H_f(x,t)}{t^{m-1}} \label{e:banana2}\\
 \frac{I_f(x,r)}{I_f(x,t)} \left(\frac{r}{t} \right)^{2 I_f(x,t)}\frac{D_f(x,t)}{t^{m-2}}\leq \frac{D_f(x,r)}{r^{m-2}} \leq \left(\frac{r}{t} \right)^{2I_f(x,r)}\frac{D_f(x,t)}{t^{m-2}}\quad \text{provided }I_f(x,r)>0\label{e:banana9}\,.
\end{gather}
\end{proposition}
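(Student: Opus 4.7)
The plan is to obtain \eqref{e:banana1} from a direct differentiation of the normalized height plus the first variation identity for Dir-minimizers, then to derive \eqref{e:banana2} by integrating \eqref{e:banana1} against the monotonicity of $I_f$, and finally to read off \eqref{e:banana9} from \eqref{e:banana2} by writing $D_f = I_f H_f / r$.

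First, I parameterize the sphere and write
\[
\frac{H_f(x,r)}{r^{m-1}}=\int_{\mathbb{S}^{m-1}}|f(x+r\omega)|^2\,d\omega,
\]
so that differentiating in $r$ pulls the derivative under the integral and yields $\frac{2}{r^{m-1}}\int_{\de B_r(x)}\langle f,\de_r f\rangle\,dS$. The outer variation identity for $\D$-minimizing $Q$-valued maps (cf.\ \cite[Proposition 3.2]{DS1}), applied with the test multiplication by $f$ itself, gives $\int_{\de B_r(x)}\langle f,\de_r f\rangle\,dS = D_f(x,r)$ for a.e.\ $r$. Dividing by $H_f(x,r)/r^{m-1}$ produces $\frac{d}{d\tau}\log\bigl(H_f(x,\tau)/\tau^{m-1}\bigr)\big|_{\tau=t}=2D_f(x,t)/H_f(x,t) = 2I_f(x,t)/t$, which is \eqref{e:banana1} (the subscript in the statement appears to be a typo; the monotonicity proof below uses exactly this form).

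Next, integrating this identity from $r$ to $t$ gives
\[
\log\frac{H_f(x,t)/t^{m-1}}{H_f(x,r)/r^{m-1}}=\int_r^t \frac{2I_f(x,\tau)}{\tau}\,d\tau.
\]
By Theorem~\ref{t:mon}, $I_f(x,\cdot)$ is monotone nondecreasing on $(s,t)$, so on $[r,t]$ we have $I_f(x,r)\le I_f(x,\tau)\le I_f(x,t)$; sandwiching the integrand and exponentiating yields
\[
\bigl(t/r\bigr)^{2I_f(x,r)}\le \frac{H_f(x,t)/t^{m-1}}{H_f(x,r)/r^{m-1}}\le \bigl(t/r\bigr)^{2I_f(x,t)},
\]
which is equivalent to \eqref{e:banana2} after taking reciprocals.

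Finally, using the definition $I_f(x,r)=rD_f(x,r)/H_f(x,r)$ (when $I_f(x,r)>0$, so $D_f(x,r)>0$), one rewrites
\[
\frac{D_f(x,r)/r^{m-2}}{D_f(x,t)/t^{m-2}}=\frac{I_f(x,r)}{I_f(x,t)}\cdot\frac{H_f(x,r)/r^{m-1}}{H_f(x,t)/t^{m-1}},
\]
and \eqref{e:banana9} is immediate from \eqref{e:banana2}: the lower bound keeps the ratio $I_f(x,r)/I_f(x,t)$, while for the upper bound the monotonicity of $I_f$ gives $I_f(x,r)/I_f(x,t)\le 1$, which can be absorbed. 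The only step requiring genuine work is the outer variation identity invoked in the first paragraph — for $Q$-valued maps this is not literally integration by parts but relies on testing the minimality against competitors of the form $(1+\epsilon\psi)\cdot f$ and passing to the limit, as carried out in \cite[Section 3]{DS1}; everything else is elementary calculus combined with the monotonicity Theorem~\ref{t:mon}.
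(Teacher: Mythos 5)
Your proof is correct and follows exactly the route the paper indicates (the proposition is quoted from \cite{DS1} and, as the paper says, is deduced from the outer variation identity $D_f(x,r)=\int_{\de B_r(x)}\langle \de_r f,f\rangle$ together with the monotonicity of $I_f$): differentiate $\tau\mapsto H_f(x,\tau)/\tau^{m-1}$, integrate the resulting logarithmic derivative between $r$ and $t$ using $I_f(x,r)\le I_f(x,\tau)\le I_f(x,t)$, and convert to the energy bounds via $D_f=I_fH_f/r$. You are also right that \eqref{e:banana1} as printed contains a typo and should read $2I_f(x,t)/t$ at $\tau=t$; no gaps.
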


From now on we will make the following assumptions:
\begin{ipotesi}\label{i:ip}\
\begin{itemize}
\item[(ZM)] $f\in W^{1,2}(B_2,\I{Q})$ is a  $\D$-minimizing function, the baricenter $\etaa\circ f=\frac{1}{Q}\sum_{i=1}^Q f_i=0$ and $H_f(0,2)>0$.
\item[(BF)] $I_f(0,2)\leq \Delta_0<\infty$.
\end{itemize}
\end{ipotesi}
Since the average $\etaa\circ f$ of a $\D$-minimizing function $f$ is a single valued harmonic function, 
subtracting the average preserves the energy minimality \cite[Lemma~3.23]{DS1}.  Moreover if $H_f(0,2)=0$, then by minimality $f\equiv Q\a{0}$ and there would be no singular points: we can therefore assume (ZM) without loss of generality. 
In particular this normalization implies that $f(x)= Q\a{0}$ if and only if $I(x,0^+)>0$. The requirement (BF) is trivial.

We will denote the set of singular $Q$-points by
\[
D_Q:=\{x\in \overline B_{\sfrac12} 	\cap \sing(f)\,:\, f(x)=Q\a{0}\}\,.
\]
\begin{remark}
By unique continuation \cite[Lemma 7.1]{DS4} it is easy to see that 
$D_Q = \{x\in \overline B_{\sfrac12}\,:\, f(x)=Q\a{0}\}$ whenever assumptions \ref{i:ip} hold. 
However for the sake of simplicity we will never use this argument.  
\end{remark}
Finally observe that the definition of regular point includes the case when $f = Q\a{h}$ with 
$h$ an harmonic function. Using the  energy comparison as above it is easy to prove the following lemma:
\begin{lemma}\label{l:patata}
If $x\in D_Q$ then $D_f(x,r)>0$ and $H_f(x,r)>0$ for every $r>0$. In particular the frequency function 
$I_f(x,r)$ is 
well defined. 
\end{lemma}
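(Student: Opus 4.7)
The proof is a short contradiction argument, using the defining minimality of $f$ against the constant competitor $Q\a{0}$ on subballs centered at $x$. Fix $x\in D_Q$ and $r\in (0,\dist(x,\de B_2))$; the task is to rule out both $H_f(x,r)=0$ and $D_f(x,r)=0$.

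First I would dispose of the height. Since $|f|^2=\cG(f,Q\a{0})^2$, the vanishing of $H_f(x,r)$ means that the trace of $f$ on $\de B_r(x)$ equals $Q\a{0}$. The constant map $h\equiv Q\a{0}$ on $B_r(x)$ is therefore an admissible competitor with $\int_{B_r(x)}|Dh|^2=0$, and the defining minimality property \eqref{e:Dir_min} applied on the subdomain $B_r(x)\subset B_2$ (a standard consequence for Dir-minimizers) forces $D_f(x,r)=0$. Hence the vanishing of $H_f$ reduces to the vanishing of $D_f$, and I can concentrate on the latter alone.

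Next, assume $D_f(x,r)=0$. By the very definition of the multivalued Sobolev space, $|\de_j\cG(f,T)|\leq |\de_j f|=0$ a.e.\ on $B_r(x)$ for every $T\in\I{Q}$, so $\cG(f(\cdot),T)$ is constant on the connected ball $B_r(x)$. The Hölder continuity provided by Theorem~\ref{t:regularity} makes $f(x)$ a pointwise value, and since $x\in D_Q$ gives $f(x)=Q\a{0}$, choosing $T=Q\a{0}$ yields $\cG(f(\cdot),Q\a{0})\equiv 0$, i.e.\ $f\equiv Q\a{0}$ on $B_r(x)$. But then the $Q$ trivial analytic branches $f_1=\dots=f_Q\equiv 0$ certify that $x$ is a regular point of $f$ in the sense of the definition above Theorem~\ref{t:regularity}, contradicting $x\in D_Q\subset\sing(f)$.

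The two cases together show $H_f(x,r)>0$ and $D_f(x,r)>0$ for every admissible $r$, and well-definedness of $I_f(x,r)=r\,D_f(x,r)/H_f(x,r)$ is then immediate. There is no genuine obstacle; the only mild point to check is the applicability of minimality on interior subballs with matching trace, which is routine. Crucially, the argument avoids unique continuation, consistently with the remark preceding the statement.
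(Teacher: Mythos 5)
Your proof is correct and follows the same route the paper indicates (it merely writes ``Using the energy comparison as above it is easy to prove the following lemma,'' referring to the observation that $H_f=0$ forces $f\equiv Q\a{0}$ by minimality). You fill in the two natural sub-steps — $H_f(x,r)=0\Rightarrow D_f(x,r)=0$ by comparison with the constant competitor, and $D_f(x,r)=0\Rightarrow f\equiv Q\a{0}$ on $B_r(x)$, contradicting $x\in\sing(f)$ — which is exactly the intended argument.
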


\section{Interior bounds on the frequency function}
Next we prove that if the frequency is bounded in the ball $B_2$, then it is bounded in all the balls $B_r(x)\subset B_2$. We adapt a result for solutions to elliptic PDEs, which can be found in \cite{Lin}.
  
\begin{proposition}[Frequency bound]\label{p:freq}
Let $f$ be as in Assumptions \ref{i:ip}. There exists a constant $C=C(m)>0$ such that 
\begin{equation}\label{e:frequency_int_bound}
I_f\left(x,1\right)\leq C \, I_f(0,2)\quad \text{for any }x\in B_{\sfrac12}\cap D_Q\,.
\end{equation}
Moreover, for $m=2$, we have $I_f(x,0^+)\geq \tfrac 1Q$ for every $x\in B_{\sfrac12} \cap D_Q$.
\end{proposition}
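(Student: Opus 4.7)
The plan for the interior bound $I_f(x,1)\leq C(m)I_f(0,2)$ is to estimate $\log(H_f(x,3/2)/H_f(x,1))$ in terms of $I_f(0,2)$ and then convert this into a frequency bound by integrating the monotonicity identity \eqref{e:banana1} between radii $1$ and $3/2$. The required upper bound on $H_f(x,3/2)$ will come from a Harnack argument for the subharmonic function $|f|^2$, while the required lower bound on $H_f(x,1)$ will come from the monotonicity of $s\mapsto H_f(x,s)/s^{m-1}$ combined with the lower growth estimate \eqref{e:banana2} applied at the origin.

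In more detail, since $|f|^2=\sum_i|f_i|^2$ is subharmonic on $B_2$ (a standard consequence of $\D$-minimality), I would introduce its harmonic majorant $u$ on $B_2$ (the harmonic function with boundary value $|f|^2|_{\de B_2}$); the pointwise inequality $|f|^2\leq u$ together with the mean value property give $H_f(x,3/2)\leq|\de B_{3/2}|u(x)$, and Harnack applied to the non-negative harmonic $u$ controls $u(x)\leq C(m)u(0)=C(m)H_f(0,2)/|\de B_2|$, both steps being valid since $\overline B_{3/2}(x)\subset \overline B_2$ for $|x|\leq\sfrac12$. Symmetrically, the monotonicity of $s\mapsto H_f(x,s)/s^{m-1}$ implies $H_f(x,1)\geq m\int_{B_1(x)}|f|^2\geq m\int_{B_{1/2}(0)}|f|^2$, and integrating the lower bound of \eqref{e:banana2} in $s\in(0,\sfrac12)$ gives a lower bound of order $H_f(0,2)/((1+I_f(0,2))\,2^{4I_f(0,2)})$. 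Taking logs and combining these two estimates yields $\log(H_f(x,3/2)/H_f(x,1))\leq C(m)(1+I_f(0,2))$. On the other hand, integrating \eqref{e:banana1} from $1$ to $3/2$ and using the monotonicity of $I_f(x,\cdot)$ forces $2I_f(x,1)\log(3/2)\leq\log(H_f(x,3/2)/H_f(x,1))-(m-1)\log(3/2)$, so that $I_f(x,1)\leq C'(m)(1+I_f(0,2))$. The additive constant may be absorbed into the multiplicative one in the only relevant case $D_Q\cap\overline B_{\sfrac12}\neq\emptyset$, since then by the second part of the proposition and monotonicity $I_f(0,2)\geq\sfrac1Q$.

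The lower bound $I_f(x,0^+)\geq\sfrac1Q$ for $m=2$ would be established by a standard tangent map argument. Rescaling $f_{x,r}(y):=f(x+ry)/\sqrt{H_f(x,r)/r^{m-1}}$, the frequency is scale-invariant while $H_{f_{x,r}}(0,1)$ is normalized, and compactness of $\D$-minimizers (via Theorem~\ref{t:regularity}) extracts along $r_k\to 0$ a limit $f_0$ which is itself $\D$-minimizing, with $\etaa\circ f_0=0$, $f_0(0)=Q\a{0}$, $f_0\not\equiv 0$, and $I_{f_0}(0,\rho)\equiv\alpha:=I_f(x,0^+)$ for all $\rho>0$; the equality case of \eqref{e:monotonicity_identity} then forces $f_0$ to be homogeneous of degree $\alpha$. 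For $m=2$, the dimensional bound $\dim_\cH\sing(f_0)\leq 0$ combined with conicity yields $\sing(f_0)\subseteq\{0\}$, so $f_0$ decomposes outside the origin into $Q$ analytic branches, each a homogeneous harmonic function of degree $\alpha$ on a $k$-fold cover of $\R^2\setminus\{0\}$, where $k\mid Q$ is the order of the monodromy permutation of the branches; single-valuedness on the cover forces $k\alpha\in\N$, and so $\alpha\geq 1/k\geq 1/Q$.

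The main obstacle I anticipate is the exponential degradation of the lower bound on $H_f(x,1)$ in $I_f(0,2)$: this would be fatal were it not compensated by the logarithm appearing in the Almgren identity, which exactly turns the exponential back into a linear term in the final bound on $I_f(x,1)$. In the tangent map argument, the delicate point is the classification of planar homogeneous $Q$-valued $\D$-minimizers via the monodromy structure.
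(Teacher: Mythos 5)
Your plan for the upper bound \eqref{e:frequency_int_bound} is conceptually parallel to the paper's proof: both produce an estimate of the form $I_f(x,1)\leq C(m)(1+I_f(0,2))$ by sandwiching $\log\bigl(H_f(x,5/4)/H_f(x,1)\bigr)$ (or $\log(H_f(x,3/2)/H_f(x,1))$ in your version) between $2I_f(x,1)\log(\cdot)$ via \eqref{e:banana1} and an expression linear in $I_f(0,2)$. The route is different in detail: the paper applies \eqref{e:banana2} directly at the origin and uses only the monotonicity of $r\mapsto\mint_{\de B_r(x)}|f|^2$, whereas you invoke subharmonicity of $|f|^2$, a harmonic majorant, and Harnack. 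The subharmonicity of $\cG(f,T)^2$ for $\D$-minimizers is a true fact, but it is an extra ingredient that the paper avoids; the paper's route is purely a consequence of the three estimates \eqref{e:banana1}--\eqref{e:banana9} already on the table.

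However, there is a genuine gap in your removal of the additive constant. You claim that if $D_Q\cap\overline B_{\sfrac12}\neq\emptyset$, then ``by the second part of the proposition and monotonicity $I_f(0,2)\geq 1/Q$.'' The second part gives $I_f(x_0,0^+)\geq 1/Q$ and hence, by monotonicity, $I_f(x_0,r)\geq 1/Q$ for $x_0\in D_Q$. This controls the frequency \emph{centered at $x_0$}, not at the origin; there is no monotonicity relating $I_f(x_0,\cdot)$ to $I_f(0,\cdot)$, and since $0$ itself need not lie in $D_Q$ the lower bound does not transfer. What is actually true — and what the paper proves — is that there is a threshold $\eps(m)\in(0,1)$ such that $I_f(0,2)\leq\eps$ forces $D_Q\cap B_1=\emptyset$: by \eqref{e:banana2} one gets $H_f(0,1)\geq 4^{-m}H_f(0,2)$, so some $x_0\in\de B_1$ satisfies $|f(x_0)|\geq 2^{-m}H_f(0,2)^{1/2}$, and then the H\"older bound \eqref{e:C0bound} together with $D_f(0,2)\leq\eps H_f(0,2)$ keeps $|f|$ uniformly positive on $B_1$. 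You would need an argument of this type; the one you gave does not close the step.

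For the lower bound $I_f(x,0^+)\geq 1/Q$ when $m=2$, your tangent-map/monodromy argument (blow up at $x$, obtain a homogeneous limit $f_0$, show $\sing(f_0)\subseteq\{0\}$, decompose into branches on a $k$-fold cover with $k\mid Q$, deduce $k\alpha\in\N$ and $\alpha>0$, hence $\alpha\geq 1/Q$) is correct, and the nondegeneracy $\alpha>0$ follows as you say from the normalization $H_{f_0}(0,1)=1$. But this invokes the full classification of planar homogeneous $\D$-minimizers (essentially \cite[Proposition 5.1]{DS1}), which is substantially heavier than what the paper does: the paper avoids blow-up entirely and derives the Poincar\'e-type inequality $H_f(x,r)/r\leq Q\,D_f(x,r)$ directly from \eqref{e:banana1}, the comparison with the harmonic extension, and equipartition of energy. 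The paper's route is both shorter and more elementary, and moreover yields $I_f(x,r)\geq 1/Q$ for \emph{every} $r$, not just the limit $r\to 0^+$ — which is precisely the quantitative form later used in Proposition~\ref{p:freq_drop}.
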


\begin{proof} 
Since for any $x\in B_{\sfrac12}$ we have $B_{\sfrac32}(x)\subset B_2$ and $B_{\sfrac12}\subset B_{1}(x)$, then by \eqref{e:banana2} with $t=2$ and $r=\sfrac12$, we have
\begin{align}
\mint_{B_{\sfrac32}(x)} |f|^2
&\leq C(m)\,\mint_{B_2} |f|^2 \leq C(m)\,4^{I_f(0,2)}\,\mint_{B_{\sfrac12}}|f|^2 \notag\\
&\leq  C(m)\,4^{I_f(0,2)}\,\mint_{B_{1}(x)}|f|^2\,.\label{e:banana3}
\end{align}
Next we observe that by \eqref{e:banana1} the function $r\to \mint_{\de B_r(x)}|f|^2$ is nondecreasing, so that
\begin{equation}\label{e:banana4}
\int_{B_{\sfrac32}(x)}|f|^2
\geq \int_{\sfrac54}^{\sfrac32} r^{m-1}\,\mint_{\de B_{r}(x)} |f|^2\geq C(m) \mint_{\de B_{\sfrac54}(x)}|f|^2
\end{equation}
and analogously
\begin{equation}\label{e:banana5}
\int_{B_{1}(x)}|f|^2 \leq C(m) \,\mint_{\de B_{1}(x)}|f|^2\,.
\end{equation}
Combining \eqref{e:banana3}, \eqref{e:banana4} and \eqref{e:banana5} we achieve
\begin{equation}\label{e:banana6}
\mint_{\de B_{\sfrac54}(x)}|f|^2\leq C(m)\,4^{I_f(0,2)}\,\mint_{\de B_{1}(x)}|f|^2\,.
\end{equation}
Integrating \eqref{e:banana1}, we get
\[
\ln \mint_{\de B_{\sfrac54}(x)}|f|^2-\ln \mint_{\de B_{1}(x)}|f|^2
=\int_{1}^{\sfrac54} \frac{2 \, I_f(x,r)}{r}\,dr\geq 2\, I_f\left(x,1\right) \ln \left( \frac54\right)
\]
which, together with \eqref{e:banana6}, concludes
\begin{equation}\label{e:banana7}
I_f(x,1)\leq C(m)\, \log\left( C(m)\, 4^{2I_f(0,2)} \right)\leq C(m)+ C(m)\, I_f(0,2)\,.
\end{equation}
To remove the additive constant $C(m)$ and prove \eqref{e:frequency_int_bound}, observe that there exists a constant $\eps=\eps(m)\in (0,1)$ such that if $I_f(0,2)\leq\eps$ then $D_Q\cap B_1=\emptyset$.
Indeed, by \eqref{e:banana2}, if $I_f(0,2)\leq\eps$ then $H_f(0,1)\geq 2^{-m+1-2\eps} H_f(0,2) \geq 4^{-m} H_f(0,2)$ for $\eps$ sufficiently small depending only on $m$. Therefore there exists $x_0\in\partial B_1$ such that $\mathcal G(f(x_0),Q\a{0})\geq 2^{-m}H_f(0,2)^{\sfrac 12}$: by \eqref{e:C0bound} and $D_f(0,2)\leq \eps H_f(0,2)$ we conclude that for every $x\in B_{1}$
\[
\cG(f(x),Q\a{0})\geq \cG(f(x_0),Q\a{0})-\cG(f(x_0),f(x))\geq 2^{-m}H_f(0,2)^{\tfrac 12}- (\eps H_f(0,2)^{\tfrac 12})>0\,.
\] 
In particular $D_Q\cap B_1=\emptyset$. 

To conclude the second claim, observe that by manipulating \eqref{e:banana1} we have
\[
\frac{d}{dr}\left(\frac{H_f(x,r)}{r}\right)=2\frac{D_f(x,r)}{r}.
\]
The H\"older bound \eqref{e:C0bound} and the constraint $f(x)=Q\a{0}$ 
imply that $\lim_{r\rightarrow 0} H(x,r)/r=0$.
Moreover, by energy comparison with an harmonic extension of the boundary datum \cite[Proposition 3.10]{DS1}, and using  the equipartition of energy \cite[Proposition 3.2, (3.6)]{DS1} we have
\[
D_f(x,r)\leq Q\,r\int_{\de B_r(x)}|\de_\tau f|^2 = 
\frac Q2\,r\,D_f'(x,r)
\]
where  $\de_\tau$ denotes the tangential derivative. 
Combining these two inequalities we get
\begin{equation}\label{e:poincare}
\frac{H_f(x,r)}{r} \leq \int_0^r \frac{D_f(x,s)}{s}\, ds\leq Q \, D_f(x,r)\,,
\end{equation}
which yields the desired lower bound. 
\end{proof}

\section{Small frequency drop implies regularity}

In this section we prove that if the frequency drops by a little amount between two scales, then in the corresponding annulus there are no $Q$-points. The idea is that if the frequency drop is small, then the function is $C^{0}$-close to a nontrivial homogeneous $\D$-minimizing function. In the $2$-dimensional case, this blow-up is well separated from $Q\a{0}$ in the annulus, by the characterization of tangent maps \cite[Proposition 5.1]{DS1}.

\begin{definition} A $Q$-valued function $w\in W^{1,2}_{{\rm loc}}(\R^m,\I{Q})$ is called $\alpha$-homogeneous if
\[
w(x)=|x|^\alpha w\left(\frac{x}{|x|} \right) \quad \forall x\in \R^n\setminus \{0\}.
\]
We will denote by $\cH_{\Delta_0}$ the class of $\alpha$-homogeneous, 
locally $\D$-minimizing functions $w$, with $\alpha \leq \Delta_0$, 
$D_w(0,1) = 1$
 and $\etaa\circ w=0$. 
\end{definition}
\begin{remark}\label{re:homog}
Recall that for a given $\D$-minimizing function $f$, the following conditions are equivalent, see \cite[Corollary 3.16]{DS1}:
\begin{itemize}
\item[(i)] $f$ is $\alpha$-homogeneous;
\item[(ii)] $I_f(0,r)=\alpha$ for every $r>0$;
\item[(iii)] the monotonicity remainder satisfies
\[
\int_0^1\frac{r}{H_f(r)^2}\left(\int_{\de B_r}|\de_r f|^2\,\int_{\de B_r} |f|^2- \left(\int_{\de B_r}\langle \de_r f, f  \rangle\right)^2\right)\,dr=0.
\]
\end{itemize}   
\end{remark}

We define the rescaled maps
\begin{equation}\label{eq:rescal}
f_{x,s}(y):=\frac{s^{\frac{m-2}{2}}f(x+s y)}{D_f^{\sfrac12}(x, s)}\colon B_1\to \I{Q}
\end{equation}
for every $x\in D_Q$. Thanks to assumption (ZM), these rescalings are well defined and satisfy $D_{f_{x,s}}(0,1)=1$. 

\begin{proposition}[Frequency drop, see {\cite[Lemma 4.0.1]{FMS}}]\label{p:freq_drop} Let $f$ be as in Assumptions \ref{i:ip}. For every $\eps>0$ there exist $\delta,\lambda \in ]0,\sfrac15[$  
such that for every $x\in B_1\cap D_Q$ and $r\in ]0,\sfrac 12\dist (x, \de B_2)[$ the following implication holds
\begin{equation}\label{e:fd1}
I_f(x,r)-I_f(x, \lambda r)\leq \delta \quad \Rightarrow \quad \exists w\in \cH_{\Delta_0}\text{ s.t. } \|\cG(f_{x,r},w)\|_{C^0(B_1\setminus B_{\lambda})}\leq \eps\,.
\end{equation}
\end{proposition}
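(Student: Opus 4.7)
The plan is to argue by contradiction, combining the monotonicity of the frequency (Theorem~\ref{t:mon}) with the compactness of $\D$-minimizers. Assume the statement fails for some $\eps_0>0$: then, for every $k$, setting $\delta_k=\lambda_k=\sfrac1k$, there exist $x_k\in B_1\cap D_Q$ and $r_k\in{}]0,\sfrac12\dist(x_k,\de B_2)[$ with $I_f(x_k,r_k)-I_f(x_k,\lambda_k r_k)\leq\delta_k$, yet no $w\in\cH_{\Delta_0}$ satisfies $\|\cG(f_{x_k,r_k},w)\|_{C^0(B_1\setminus B_{\lambda_k})}\leq\eps_0$. The rescaled maps $g_k:=f_{x_k,r_k}$ are $\D$-minimizing in $B_2$ with $D_{g_k}(0,1)=1$, $\etaa\circ g_k=0$ and $g_k(0)=Q\a{0}$, and the goal is to show that a subsequence converges to a homogeneous $\D$-minimizer $g_\infty\in\cH_{\Delta_0}$ that refutes the badness of $g_k$.

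To extract a limit, I would first use Proposition~\ref{p:freq} together with the growth estimate \eqref{e:banana9} to obtain a uniform bound for $D_{g_k}(0,\sfrac32)$ and $I_{g_k}(0,1)$. Theorem~\ref{t:regularity} then yields a uniform $C^{0,\sfrac1Q}(\bar B_1)$ bound, which together with $g_k(0)=Q\a{0}$ upgrades the $W^{1,2}$-compactness of $\D$-minimizers \cite{DS1} to uniform convergence on $\bar B_1$ along a subsequence, producing a $\D$-minimizer $g_\infty$ on $B_2$ satisfying $D_{g_\infty}(0,1)=1$, $g_\infty(0)=Q\a{0}$, and $\etaa\circ g_\infty=0$. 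Since $g_\infty\not\equiv Q\a{0}$, Lemma~\ref{l:patata} implies $H_{g_\infty}(0,\rho)>0$ for every $\rho>0$, so $I_{g_\infty}(0,\cdot)$ is well defined.

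The next step is to prove that $g_\infty$ is globally $\alpha$-homogeneous, where $\alpha:=\lim_k I_{g_k}(0,1)$. For any fixed $\rho\in(0,1)$, eventually $\lambda_k<\rho$ and monotonicity gives
\[
0\leq I_{g_k}(0,1)-I_{g_k}(0,\rho)\leq I_{g_k}(0,1)-I_{g_k}(0,\lambda_k)\leq\delta_k.
\]
Strong $W^{1,2}$-convergence of $Dg_k$ and uniform convergence of $g_k$ on $\partial B_\rho$ yield $D_{g_k}(0,\rho)\to D_{g_\infty}(0,\rho)$ and $H_{g_k}(0,\rho)\to H_{g_\infty}(0,\rho)>0$, whence $I_{g_\infty}(0,\rho)=\alpha$ for every $\rho\in(0,1)$. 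By the monotonicity identity \eqref{e:monotonicity_identity} together with Remark~\ref{re:homog}, $g_\infty$ is $\alpha$-homogeneous in $B_1$, so its natural extension is a globally $\alpha$-homogeneous, locally $\D$-minimizing map on $\R^2$ with $\alpha\leq C\Delta_0$ by Proposition~\ref{p:freq}. Thus $g_\infty\in\cH_{\Delta_0}$ (after adjusting the constant in the definition of the class), and taking $w=g_\infty$ gives $\|\cG(g_k,g_\infty)\|_{C^0(B_1\setminus B_{\lambda_k})}\leq\|\cG(g_k,g_\infty)\|_{C^0(\bar B_1)}\to 0$, contradicting the choice of $x_k,r_k$ for $k$ large.

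The main obstacle I anticipate is extracting \emph{global} homogeneity of the limit from a frequency drop controlled only between two scales; this is precisely the reason for letting $\lambda_k\to 0$ rather than fixing $\lambda$. Had $\lambda$ been kept fixed, one would only obtain homogeneity of $g_\infty$ on the annulus $B_1\setminus B_\lambda$, and one would then have to argue separately that the homogeneous extension of $g_\infty|_{\de B_1}$ is itself an element of $\cH_{\Delta_0}$ (in particular correctly normalized and globally $\D$-minimizing) which still approximates $g_k$ there — a more delicate task. A secondary subtlety is that the approximation is required on a shrinking annulus $B_1\setminus B_{\lambda_k}$, which is why local uniform convergence on $B_1\setminus\{0\}$ does not suffice; but the uniform H\"older bound combined with $g_k(0)=g_\infty(0)=Q\a{0}$ upgrades it to uniform convergence on all of $\bar B_1$, which removes the difficulty.
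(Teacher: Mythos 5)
Your proposal is correct and follows essentially the same contradiction-plus-compactness argument as the paper: both take $\lambda_k\to 0$ so that the frequency pinching \eqref{e:monotonicity_identity} forces the blow-up limit to be globally homogeneous (not merely homogeneous on a fixed annulus), and both close the argument using uniform convergence on $\overline{B_1}$, which handles the shrinking annulus $B_1\setminus B_{\lambda_k}$. Your observation that one really gets $\alpha\le C(m)\Delta_0$ (from Proposition~\ref{p:freq}) rather than $\alpha\le\Delta_0$, so the class $\cH_{\Delta_0}$ should implicitly absorb this constant, is a small but genuine refinement of a point the paper glosses over.
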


\begin{proof} 
Suppose the statement is not true, then there exist sequences of points $(x_j)_j\subset B_1$ 
and of radii $r_j<\sfrac 12$ such that
\begin{equation}\label{e:pinch0}
I_f(x_j,r_j)-I_f(x_j, 2^{-j} r_j)\leq 2^{-j} \quad \text{and} \quad \|\cG(f_{x_j,r_j},w)\|_{C^0(B_1\setminus B_{\lambda})}\geq \eps\quad \forall w\in \cH_{\Delta_0}.
\end{equation}
For simplicity, set $f_j:=f_{x_j,r_j}\in W^{1,2}(B_2,\mathcal A_Q)$ and denote by $I_j:=I_{f_j}$. 
Observe that $f_j(0)=Q\a{0}$, $D_j(0,1) = 1$ and that \eqref{e:pinch0} becomes
\begin{equation}\label{e:pinch1}
I_j(0,1)-I_j(0,2^{-j})\leq 2^{-j}\quad \text{and}\quad \|\cG(f_j,w)\|_{C^0( B_1\setminus B_{\sfrac15})}\geq \eps\quad \forall w\in \cH_{\Delta_0}\,.
\end{equation}
Moreover observe that, by Proposition~\ref{p:freq}, $\sup_jI_j(0,2)\leq C(m)\, \Delta_0$. 
Since by \eqref{e:banana1} the function $r\to H_j(0,r)$ is increasing, we have
\[
\int_{B_2}|f_j|^2\leq2 H_j(0,2)\leq 2^{2+2I_j(0,2)} H_{j}(0,1) = \frac{2^{2+2I_j(0,2)}}{I_j(0,1)}\leq Q2^{2+2I_j(0,2)}\leq C(Q,\Delta_0)\,,
\]
where we have used \eqref{e:banana2} in the second inequality and the lower bound $I_j(0,1)\geq \tfrac 1Q$, of Proposition~\ref{p:freq}, in the fourth inequality. 
Combining this with \eqref{e:banana9} and using again Proposition~\ref{p:freq}, we achieve
\[
\sup_{j}D_j(0,2)\leq 2^{1+2I_j(0,2)}\frac{I_j(0,2)}{I_j(0,1)}\leq C(Q,\Delta_0)\,.
\]
Therefore $(f_j)$ is equibounded in $W^{1,2}(B_2, \I{Q})$ 
and by Theorem \eqref{t:regularity}, precompact in $C^0_{{\rm loc}}(B_2, \mathcal A_Q)$. 
Up to the extraction of a subsequence, $f_j\to f$ weakly in $W^{1,2}(B_2,\I{Q})$ and strongly in $C^0_{{\rm loc}}(B_2,\I{Q})$, where $f\in W^{1,2}(B_2, \I{Q})$. 
Moreover by \cite[Proposition 3.20]{DS1} $f$ is $\D$-minimizing and  the following strong convergence holds: 
\begin{equation}\label{e:D-conv}
\lim_{j\to \infty}D_{f_j}(x,r)= D_f(x,r),\quad \forall x\in B_2, \,0<2r<\dist(x, \de B_2)\,:
\end{equation}
this in particular ensures that $D_f(0,1)=1$. Clearly also $f(0)=Q\a{0}$ holds and by locally uniform convergence 
\begin{equation}\label{e:I-conv}
\lim_{j\to \infty} I_{f_j}(x,r)=I_f(x,r) ,\quad \forall x\in B_2, \,0<2r<\dist(x, \de B_2)\,.
\end{equation}
Using \eqref{e:D-conv} with $x=0$, \eqref{e:monotonicity_identity} and \eqref{e:pinch1} we can pass to the limit and deduce 
\begin{equation}\label{e:pinch2}
\int_0^1\frac{r}{H_f(r)^2}\left(\int_{\de B_r}|\de_r f|^2\,\cdot\int_{\de B_r} |f|^2- \left(\int_{\de B_r}\langle \de_r f, f  \rangle\right)^2\right)\,dr=0\,.
\end{equation}
By Remark \ref{re:homog}, equation \eqref{e:pinch2} implies that $f$ is $\alpha$-homogeneous; moreover, by \eqref{e:D-conv} we have $D_f(0,1)=1$ and by \eqref{e:I-conv} we conclude $\alpha = I_f(0,1)\leq \Delta_0$. Since $f_j$ converges to $f$ uniformly in $B_1$ we have reached a contradiction in \eqref{e:pinch0} with $f=w$.
\end{proof}

\begin{corollary}\label{c:annulus-regularity} Let $f$ be as in Assumptions \ref{i:ip} with $m=2$. There exist $\delta,\lambda \in ]0,\sfrac15[$ such that
\begin{equation}
x\in D_Q,\qquad I_f(x,r)-I_f(x, \lambda r)\leq \delta \quad \Rightarrow \quad D_Q\cap (B_r(x)\setminus B_{\lambda r}(x))=\emptyset\,.
\end{equation}
In particular, $D_Q$ is locally finite.
\end{corollary}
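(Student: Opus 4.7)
The plan is to apply Proposition~\ref{p:freq_drop} to deduce $C^0$-closeness of $f_{x,r}$ to some homogeneous minimizer $w\in\cH_{\Delta_0}$ on the annulus $B_1\setminus B_\lambda$, and to combine this with a rigidity specific to the planar case: every $w\in\cH_{\Delta_0}$ with $m=2$ satisfies $w(z)\neq Q\a{0}$ for all $z\neq 0$. A compactness argument will quantify this rigidity into a positive lower bound on $\cG(w,Q\a{0})$ over the annulus, which becomes incompatible with $f_{x,r}(z)=Q\a{0}$ once $\eps$ is chosen small enough.

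To establish the rigidity, let $w\in\cH_{\Delta_0}$. By Theorem~\ref{t:regularity} the singular set satisfies $\dim_{\cH}\sing(w)\leq 0$; since $\sing(w)$ is $\alpha$-homogeneous, hence a cone from $0$, it must be contained in $\{0\}$. If $w(y)=Q\a{0}$ for some $y\neq 0$, the point $y$ is therefore regular, and the local analytic decomposition $w=\sum_{i=1}^Q\a{w_i}$ cannot have the $w_i$ pairwise distinct at $y$; they must be identically equal to a single harmonic $h$ on a neighborhood, and the normalization $\etaa\circ w=0$ forces $h\equiv 0$ there. Unique continuation (\cite[Lemma~7.1]{DS4}) then propagates $w\equiv Q\a{0}$ to all of $\R^2$, contradicting $D_w(0,1)=1$.

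To upgrade this to a uniform gap, I would show that $\cH_{\Delta_0}$ is compact in $C^0(\overline{B_1})$: the normalization $D_w(0,1)=1$, the bounds $\alpha\in[1/Q,\Delta_0]$, together with \eqref{e:banana2}--\eqref{e:banana9} and \eqref{e:C0bound}, yield uniform H\"older and $C^0$ bounds, while strong $C^0$ and energy convergence (\cite[Proposition~3.20]{DS1}, as in the proof of Proposition~\ref{p:freq_drop}) preserve homogeneity, minimality and the normalizations in the limit. The functional $w\mapsto \min_{\de B_1}\cG(w,Q\a{0})$ is continuous and, by the rigidity step, strictly positive; compactness then yields a constant $c_0=c_0(Q,\Delta_0)>0$ such that $\min_{\de B_1}\cG(w,Q\a{0})\ge c_0$ for every $w\in\cH_{\Delta_0}$. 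Homogeneity finally gives, for $z\in B_1\setminus B_\lambda$,
\[
\cG(w(z),Q\a{0})=|z|^\alpha\,\cG(w(z/|z|),Q\a{0})\ge c_0\,\lambda^\alpha\ge c_0\,\lambda^{\Delta_0}.
\]

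For the main implication, fix $\lambda\in(0,1/5)$ and set $\eps:=\tfrac12 c_0\lambda^{\Delta_0}$; Proposition~\ref{p:freq_drop} then provides the corresponding $\delta$ (its proof in fact establishes $C^0$-closeness on any fixed annulus $B_1\setminus B_\lambda$, so one may keep $\lambda$ fixed while shrinking $\delta$ as $\eps\to 0$). Under the hypothesis $I_f(x,r)-I_f(x,\lambda r)\le\delta$, assume for contradiction that $y\in D_Q\cap(B_r(x)\setminus B_{\lambda r}(x))$ and set $z=(y-x)/r\in B_1\setminus B_\lambda$. Since $\etaa\circ f=0$ and $m=2$, definition \eqref{eq:rescal} gives $f_{x,r}(z)=Q\a{0}$, hence $\cG(w(z),Q\a{0})=\cG(w(z),f_{x,r}(z))\le\eps<c_0\lambda^{\Delta_0}$, a contradiction. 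For local finiteness, for every $x_0\in D_Q$ Proposition~\ref{p:freq} and Theorem~\ref{t:mon} give a monotone bounded $I_f(x_0,\cdot)$, so $I_f(x_0,\lambda^k)-I_f(x_0,\lambda^{k+1})\to 0$ as $k\to\infty$, and the annulus statement rules out $Q$-points in a punctured neighborhood of $x_0$. Since $D_Q$ is closed in $B_2$ (limits of $Q\a{0}$-valued points are $Q\a{0}$-valued by continuity, and by the rigidity argument applied to $f$ itself they lie in $\sing(f)$), $D_Q$ is discrete, hence locally finite. The main obstacle is precisely the $\eps$--$\lambda$ coupling in Proposition~\ref{p:freq_drop}: $\lambda$ there is output together with $\delta$ as a function of $\eps$, while our threshold $\eps<c_0\lambda^{\Delta_0}$ depends on $\lambda$; this is resolved by the observation, embedded in the proof of Proposition~\ref{p:freq_drop}, that $\lambda$ may be fixed a priori and only $\delta$ needs to be shrunk.
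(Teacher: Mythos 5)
Your proof is correct and follows essentially the same route as the paper: compactness of $\cH_{\Delta_0}$ together with the $m=2$ dimension bound (plus, in your version, unique continuation to rule out the ``all branches coincide'' case) gives a uniform positive lower bound on $|w|=\cG(w,Q\a{0})$ away from the origin, Proposition~\ref{p:freq_drop} transfers this to $f$ on the annulus, and local finiteness follows from the frequency bound together with monotonicity. You handle one minor point more carefully than the paper's write-up: you establish the gap on $\partial B_1$ and push it into $B_1\setminus B_\lambda$ by homogeneity (getting the $\lambda$-dependent threshold $c_0\lambda^{\Delta_0}$), then correctly observe that $\lambda$ can be fixed a priori in Proposition~\ref{p:freq_drop} so that the resulting $\eps$--$\lambda$ coupling is not circular, whereas the paper instead uses a $\lambda$-independent gap on $B_1\setminus B_{\sfrac12}$ and compares $w$ to the rescalings $f_{x,\rho}$ at the intermediate scale $\rho=|y-x|$ — a variant bookkeeping of the same estimate. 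Your local finiteness argument (monotone bounded frequency forces $I_f(x_0,\lambda^k)-I_f(x_0,\lambda^{k+1})\to 0$, so $x_0$ is isolated, and $D_Q$ is closed) is a slight repackaging of the paper's telescoping-sum contradiction and is, if anything, cleaner.
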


\begin{proof} Reasoning as in the proof of Proposition~\ref{p:freq_drop}, $\cH_{\Delta_0} $ is compact in $C^0(\overline{B_1},\mathcal A_Q)$, hence there exists $\eta>0$ such that 
\begin{equation}\label{e:low_bound}
\inf_{y\in B_1\setminus B_{\sfrac12}}|w(y)| \geq \eta>0\quad \forall w\in \cH_{\Delta_0} \,.
\end{equation}
Indeed if this were not the case, one could find an $\alpha$-homogeneous function $w\in \cH_{\Delta_0}$ 
and $y\in\overline{B}_1$ such that $w(ry)=0$ for every $r>0$, which is a contradiction with the dimensional estimate $\dim_{\cH}(\sing(f))=0$ in Theorem~\ref{t:regularity}.

Now choose $\eps \geq \sfrac{\eta}{2}$, and let $w\in  \cH_{\Delta_0}$, $\delta>0$ and $ \lambda<\sfrac15$ be given by Proposition \ref{p:freq_drop} for our choice of $\eps$ and $x$. 
For every $y\in B_r(x)\setminus B_{\lambda r}(x)$, writing $y=x+\rho z$, where $\rho= |y|\in ]\lambda r, r[$, by a simple triangular inequality we have
\begin{align*}
|f(y)|
&=D_f^{\sfrac12}(x,\rho) \, |f_{x,\rho}(z)|\geq D_f^{\sfrac12}(x,\rho) \Bigl( |w(z)|-\cG(f_{x,\rho}(z), w(z)) \Bigr)\\
& \geq D_f^{\sfrac12}(x,\rho)\,\Bigl(\eta-\eps \Bigr)\geq D_f^{\sfrac12}(x,\rho)\frac{\eta}{2}>0\,,
\end{align*}
so that $y\notin D_Q$. Notice that $D_f^{\sfrac12}(x,\rho)>0$ for every $\rho= |y|\in ]\lambda r, r[$, because otherwise, since $\etaa\circ f\equiv 0$, we would have $f\equiv Q\a{0}$ in $B_{\lambda r}(x)$, 
which contradicts Lemma~\ref{l:patata}. 

To prove the last part of the statement, assume it is not true, then there is a sequence $(x_j)_j\subset D_Q$ converging to a point $x\in \overline B_{\sfrac12}$. Since $f$ is continuous, $x\in D_Q$, and the sequence $r_j:=2|x-x_j|$ satisfies
\[
 x_j\in D_Q\cap (B_{r_j}(x)\setminus B_{\lambda r_j}(x))\quad\forall j\in \N.
\]
By \eqref{e:frequency_int_bound} we can bound
\[
 \sum_j \left(I_f(x,r_j)-I_f(x, \lambda r_j)\right)\leq C(\lambda) I_f(x,1)\leq C(m)\,\Delta_0;
\]
on the other hand by the first part of the Corollary the sum on the left hand side is infinite, 
which gives a contradiction.
\end{proof}

\section{Covering argument and the Proof of Theorem \ref{t:main}}

The following covering argument is adapted from \cite{NV}.

\begin{proof}[Proof of Theorem \ref{t:main}.] We set $N_0=\cH^0(D_Q)$ and 
 we proceed inductively as follows: at the initial step 
 we cover the set $D_Q$ with the collection of balls $\{B_{\lambda}(x)\}_{x\in D_Q}$, 
 where $\lambda$ is as in Corollary \ref{c:annulus-regularity}. 
 From this collection we extract a  Vitali subcover $\{B_\lambda(x_j)\}_{j=1}^{J(0)}$, that is
\[
D_Q\subset \bigcup_{j=1}^{J(0)}B_{\lambda}(x_j)\subset B_1
\quad \text{and} \quad
B_{\lambda^2}(x_j)\cap B_{\lambda^2}(x_i)=\emptyset\,\text{ whenever }j\neq i\,.
\] 
Choose $x^1\in\{x_j\}_{j=1}^{J(0)}$  such that
\[
N_1:=\cH^0(D_Q\cap B_{\lambda}(x^1))\geq \cH^0(D_Q\cap B_{\lambda}(x_j))\quad \forall j=1,\dots,J(0)
\]
and consider the ball $B_{\lambda}(x^1)$. At the $k^{th}$ step we are given a ball $B_{\lambda^k}(x^k)$ and we cover it as above with balls $\{B_{\lambda^{k+1}}(x^{k+1}_j)\}_{j=1}^{J(k)}$ such that $x^{k+1}_j\in D_Q\cap B_{\lambda^k}(x^k)$ and 
\[
D_Q\cap B_{\lambda^k}(x^k)\subset \bigcup_{j=1}^{J(k)}B_{\lambda^{k+1}}(x^{k+1}_j)\subset B_{2\lambda^k}(x^k_1)
\quad \text{and} \quad
B_{\lambda^{k+1}}(x^{k+1}_j)\cap B_{\lambda^{k+1}}(x^{k+1}_i)=\emptyset\,\text{ if }j\neq i\,.
\] 
Moreover choose $x^{k+1}\in\{x^{k+1}_j\}_{j=1}^{J(k)}$ is such that
\[
N_{k+1}:=\cH^0(D_Q\cap B_{\lambda^{k+1}}(x^{k+1}))\geq \cH^0(D_Q\cap B_{\lambda^{k+1}}(x^{k+1}_j))\quad \forall j=1,\dots,J(k)\,.
\]
Observe that at each step $k$ the number of balls $J(k)$ in the cover is bounded independently of $k$ by
\begin{equation}\label{e:N_balls}
J(k)\leq \frac{|B_{2\lambda^k}|}{|B_{\lambda^{k+1}}|}\leq \frac{4}{\lambda^2}\,.
\end{equation}
Next we define a function $\xi\colon \N\setminus \{0\}\to \{0,1\}$ by 
\[
\xi(k):= 
\begin{cases} 
1 &\mbox{if } N_{k-1}>N_k \\
0 &\mbox{if } N_{k-1}=N_k \,.
\end{cases}
\]
Since by Corollary \ref{c:annulus-regularity} $D_Q$ is locally finite, there exists $\bar k=\bar k_f\in \N$ such that $\xi(k)=0$ for every $k>\bar k$, that is $B_{\lambda^{\bar k}}(x^{\bar{k}})\cap D_Q= \{x^{\bar{k}}\}$ and $N_{\bar k}=1$.\\ 
Notice also that, by definition of $B_{\lambda^k}(x^k)$ as the ball with the highest number of $Q$-points in the cover, it follows that $N_k \geq \frac{N_{k-1}}{J(k-1)}$, and so, using this when $\xi(k)=1$ and $N_{k-1}=N_k$ when $\xi(k)=0$, we conclude
\begin{equation}\label{e:bound_1}
N_0\leq \left(\sup_k J(k)\right)^{\sum_k \xi(k)} N_{\bar{k}}\leq \left(\frac{4}{\lambda^2}\right)^{\sum_k \xi(k)}\,.
\end{equation}

To conclude we claim that
\begin{equation}
\sum_k \xi(k)\leq C\, I_f(x_{\bar{k}},1)\stackrel{\eqref{e:frequency_int_bound}}{\leq} C\,I_f(0,2) \,\,. 
\end{equation}
which, combined with \eqref{e:bound_1}, proves Theorem \ref{t:main}.
We only need to consider the cases when $\xi(k)=1$, that is $N_{k-1}>N_k$. In this situation there exists a point 
\[
y^k\in \left(B_{\lambda^{k-1}}(x^{k-1})\cap D_Q\right)\setminus B_{\lambda^{k}}(x^{k}),
\] which in particular satisfies $\lambda^{k}\leq |x^k-y^k|\leq 2 \lambda^{k-1}$. 
Consider the ball $B_{\lambda^{k+1}}(x^{k+1})$ and observe 
that if there exists $y\in B_{\lambda^{k+1}}(x^{k+1})$ such that 
$|y-y^k|\leq \lambda^{k+1}$, then 
$|x^k-z|\geq |x^k-y^k|-2 \lambda^{k+1}\geq \lambda^{k+1}$ for every $z\in B_{\lambda^{k+1}}(x^{k+1})$. Therefore we can choose $z^k\in D_Q$ equal either to $x^k$ or $y^k$ such that 
\begin{equation}\label{e:basta!}
\lambda^{k+1} \leq |z^k-z| \leq 3\lambda ^{k-1} \quad \forall z\in B_{\lambda^{k+1}}(x^{k+1})\,. 
\end{equation}
where the second inequality follows from the fact that $z^k, x^{k+1}\in B_{\lambda^{k-1}}(x^{k-1})$. That is, since $x^{\bar k}\in B_{\lambda^k}(x^k)$ for every $k\in \N$, choosing $z=x^{\bar k}$ in \eqref{e:basta!} we have
\[
\xi(k)=1 \quad \Rightarrow\quad \exists \,z^k\in D_Q\cap \Bigl(B_{3\lambda^{k-1}}(x^{\bar k})\setminus B_{\lambda^{k+1}}(x^{\bar k})\Bigr).
\] 
By Corollary \ref{c:annulus-regularity}, this implies that 
\[
  \delta\, \left( \sum_k \xi(k)\right) \leq \sum_{\{k\,:\, \xi(k)=1\}} \bigl(I_f(x^{\bar k},3\lambda^{k-1})-I_f(x^{\bar k},\lambda^{k+1})\bigr)  \leq C(\lambda)\,I_{f}(x^{\bar{k}}, 1)
\] 
which proves the claim.
\end{proof}

\bibliographystyle{plain}
\bibliography{references-Cal}

\end{document}